\theoremstyle{plain}
\newtheorem{theorem}[subsection]{Theorem}
\newtheorem{lemma}[subsection]{Lemma}
\newtheorem{conjecture}[subsection]{Conjecture}
\newtheorem{corollary}[subsection]{Corollary}
\newtheorem{proposition}[subsection]{Proposition}
\theoremstyle{definition}
\newtheorem{remark}[subsection]{Remark}
\renewcommand{\leq}{\leqslant}
\newcommand{\CC}{\ensuremath{\mathbb{C}}}
\newcommand{\QQ}{\ensuremath{\mathbb{Q}}}
\newcommand{\ZZ}{\ensuremath{\mathbb{Z}}}
\DeclareMathOperator{\h}{\mathfrak{h}} 
\DeclareMathOperator{\Km}{\mathrm{Km}} 
\DeclareMathOperator{\tr}{\mathrm{tr}} 
\title{On the motive of O'Grady's six-dimensional \\ hyper-K\"ahler varieties}
\author{Salvatore Floccari}
\address{Institute of Algebraic Geometry, Leibniz University Hannover, Germany}
\email{floccari@math.uni-hannover.de}
\begin{document}



\maketitle

\begin{prelims}

\DisplayAbstractInEnglish

\bigskip

\DisplayKeyWords

\medskip

\DisplayMSCclass







\end{prelims}


\newpage

\setcounter{tocdepth}{1}

\tableofcontents


        \section{Introduction}
	
	Moduli spaces of stable sheaves on a K3 or Abelian surface constitute a prominent source of construction of hyper-K\"{a}hler varieties. The geometry of the higher dimensional varieties constructed in this way can be fruitfully studied  through the relationship with the underlying surface. 
	For instance, the rational Hodge structure on the cohomology of the hyper-K\"{a}hler varieties so obtained can be expressed in terms of that on the cohomology of the surface (\emph{cf.}~\cite{green2019llv}). When such a relation can be established at the level of motives, it has important consequences for the study of algebraic cycles.
	
	Let us now be more precise on the moduli spaces and hyper-K\"{a}hler varieties we will consider. Let $A$ be a complex Abelian surface. Its Mukai lattice is 	
	\[
	\widetilde{H}^2(A,\ZZ) \coloneqq H^0(A,\ZZ)\oplus H^{2}(A,\ZZ) \oplus H^4(A,\ZZ),
	\]
	with the pairing $\bigl((a,b,c), (a',b',c')\bigr)= (b,b')-ac'-a'c$. It is equipped with a weight~$2$ Hodge structure induced by that on $H^2(A,\ZZ)$ and declaring $H^0(A,\ZZ)$ and $H^4(A,\ZZ)$ of type $(1,1)$. To any coherent sheaf $\mathcal{F}$ on $A$ is assigned its Mukai vector $v(\mathcal{F})\coloneqq \mathrm{ch}(\mathcal{F})$;
	a vector $v\in \widetilde{H}^2(A,\ZZ)$ is called effective if $v=v(\mathcal{F})$ for some coherent sheaf $\mathcal{F}$.
	
	Given an effective Mukai vector $v\in \widetilde{H}^2(A,\ZZ)$ and a $v$-generic polarization $H$, there exists a projective moduli space $M_v(A,H)$ of (S-equivalence classes of) $H$-semistable coherent sheaves on $A$ with Mukai vector $v$, see \cite{huybrechts2010geometry}. It is irreducible of dimension $(v,v)+2$, and its smooth locus admits a nowhere degenerate holomorphic $2$-form~\cite{Muk84}.
	By \cite{Yos01}, the Albanese map $M_{v}(A,H)\to A \times \hat{A}$ is an isotrivial fibration; we denote by~$K_v(A,H)$ the fibre of this morphism.
	
	When the effective Mukai vector $w$ is primitive, the moduli space $M_w(A,H)$ is a smooth and projective variety, and $K_w(A, H)$ is a hyper-K\"{a}hler variety of generalized Kummer type (\emph{cf.}~\cite{Yos01}).
	 
	When instead $v$ is not primitive the moduli space $M_v(A,H)$ is singular. O'Grady discovered \cite{O'G99} that $M_{(2,0,-2)}(A,H)$ admits a crepant resolution $\widetilde{M}_{(2,0,-2)}(A,H)$, by which we mean that the pull-back of the holomorphic $2$-form on the regular locus of $M_{(2,0-2)}(A,H)$ extends to a nowhere degenerate holomorphic $2$-form on the resolution. By \cite{LS06} and \cite{KLS06}, a crepant resolution $\widetilde{M}_v(A,H)\to M_v(A,H)$ exists if and only if $v=2v_0$ with $v_0$ a primitive and effective Mukai vector of square $2$. In this case we obtain as well a crepant resolution $\widetilde{K}_v(A,H)$ of $K_v(A,H)$; by \cite{O'G03} and \cite{PR13}, the $\widetilde{K}_v(A,H)$ are hyper-K\"{a}hler varieties of dimension $6$, all deformation equivalent to each other. They are called of OG$6$-type.
	
	Each of the varieties introduced above is expected to be motivated by the surface; this means that the rational Chow motive of such a variety should belong to the tensor subcategory generated by the motive of~$A$.
	For the smooth and projective moduli spaces $M_w(A,H)$ associated to a primitive Mukai vector $w$, this has been confirmed by B\"ulles~\cite{Bue18}. When $v=2v_0$ with $v_0$ primitive, effective and of square~$2$, an extension of his argument allows to show that also the $10$-dimensional varieties~$\widetilde{M}_v(A,H)$ are motivated by $A$, see \cite{FFZ}. Both results are based on Markman's work \cite{Mar02}.
	For the hyper-K\"{a}hler varieties $K_w(A,H)$ and $\widetilde{K}_v(A,H)$, however, the question remained open; in this note we deal with the OG$6$-type varieties $\widetilde{K}_v(A,H)$.

	\begin{theorem}\label{thm:main}
		Let $A$ be a complex Abelian surface, $v=2v_0$ a Mukai vector with~$v_0$ primitive, effective and of square $2$, and let $H$ be a $v$-generic polarization on~$A$. Then the rational Chow motive $\mathfrak{h}(\widetilde{K}_v(A,H))$ belongs to $\langle \mathfrak{h}(A)\rangle_{\mathsf{CHM}}$, the pseudo-Abelian tensor subcategory of motives generated by the motive of $A$.
		In fact, we have
		\begin{eqnarray*} \label{eq:formula}
		\h (\widetilde{K}_v (A,H)) & = & \h (\Km(A)^{(3)})  \oplus \h(\Km(A)\times \Km(A))(-1) \oplus \h(\Km(A\times A))(-1) \\ & & \oplus 137\h(\Km(A))(-2)\oplus 512\mathsf{Q}(-3).
		\end{eqnarray*}
   \end{theorem}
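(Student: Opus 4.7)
My plan is to compute $\h(\widetilde{K}_v(A,H))$ by directly analysing the O'Grady resolution $\widetilde{K}_v\to K_v$ through the stratification of the singular locus and its explicit local models.

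The qualitative part of the theorem (that $\h(\widetilde{K}_v)\in\langle\h(A)\rangle_{\mathsf{CHM}}$) should follow formally from the result of \cite{FFZ} that $\h(\widetilde{M}_v)\in\langle\h(A)\rangle$. Indeed, the Albanese map $M_v\to A\times\hat A$ is isotrivial (Yoshioka) and this structure lifts to the symplectic resolution, so after an \'etale trivialisation of this fibration followed by a descent argument, one obtains a K\"unneth-type identification $\h(\widetilde{M}_v)\cong\h(A\times\hat A)\otimes\h(\widetilde{K}_v)$ realised inside the subcategory generated by $\h(A)$. From this the qualitative statement is formal.

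For the explicit formula I would analyse the singular strata of $K_v$. The singular locus of $M_v$ is the image of the natural map $\mathrm{Sym}^2 M_{v_0}\to M_v$ sending $[E_1]+[E_2]$ to the polystable sheaf $E_1\oplus E_2$; intersecting with the Albanese fiber yields a generic codimension-two stratum $\Sigma_1\subset K_v$ along which the transverse singularity is an $A_1$-singularity, and a deeper $0$-dimensional stratum $\Sigma_2\subset\Sigma_1$ corresponding to diagonal polystables $E\oplus E$ with a prescribed Albanese image. Since $v_0^2=2$, the space $M_{v_0}$ has dimension $4$ and $K_{v_0}$ is a finite set, so $\Sigma_2$ is finite and $\Sigma_1$ identifies (after dividing by the exchange involution) with a Kummer-type $4$-fold built from $A$. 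The O'Grady desingularization, following Lehn--Sorger and KLS, is then realised as a successive blow-up of $\Sigma_2$ followed by the proper transform of $\Sigma_1$, composed with a finite quotient. Applying the blow-up formula $\h(\mathrm{Bl}_Z X)=\h(X)\oplus\bigoplus_{i=1}^{\mathrm{codim}(Z)-1}\h(Z)(-i)$ at each step, identifying the smooth locus of $K_v$ with a variety birational to $\Km(A)^{(3)}$, and identifying the transverse directions to $\Sigma_1$ respectively the normal cone to $\Sigma_2$ with the Kummer-type pieces $\Km(A)\times\Km(A)$ and $\Km(A\times A)$, should assemble to the stated decomposition.

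The main obstacle will be producing the coefficients $137$ and $512$ on the nose. They arise from the interplay between (i) the projective bundle structure of the exceptional divisors over $\Sigma_1$ and $\Sigma_2$, each of which contributes several Tate-twisted summands, (ii) the action of the finite group governing the O'Grady quotient on the Chow motives of these exceptional loci, and (iii) the $2$-torsion combinatorics on $A$ that parametrises $\Sigma_2$. Obtaining these exact integers requires careful bookkeeping of which Tate summands split off at each weight after projection onto the relevant isotypic components, and I expect this combinatorial accounting to be the most delicate step of the proof.
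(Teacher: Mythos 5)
Your proposal has two genuine gaps, and together they miss the central mechanism of the proof. First, the ``formal'' deduction of the qualitative statement from the result of \cite{FFZ} on $\widetilde{M}_v$ does not work. The Albanese fibration is isotrivial, but it only splits after a nontrivial \'etale base change: one has $\mathfrak{h}(\widetilde{M}_v)\cong\bigl(\mathfrak{h}(\widetilde{K}_v)\otimes\mathfrak{h}(T)\bigr)^{G}$ for a finite deck group $G$ acting \emph{diagonally} (on $\widetilde{K}_v$ by translations and tensoring with line bundles), and since that action on $\mathfrak{h}(\widetilde{K}_v)$ is nontrivial, $\mathfrak{h}(\widetilde{K}_v)$ is not a direct summand of $\mathfrak{h}(\widetilde{M}_v)$; there is no K\"unneth identification $\mathfrak{h}(\widetilde{M}_v)\cong\mathfrak{h}(A\times\hat A)\otimes\mathfrak{h}(\widetilde{K}_v)$ to descend. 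This is precisely why the question for the fibre $\widetilde{K}_v$ remained open after B\"ulles and \cite{FFZ}, and it is the problem this paper is written to solve. The actual argument goes through the Mongardi--Rapagnetta--Sacc\`a construction: a K3$^{[3]}$-type variety $\underline{Y}_v(A,H)$ whose quotient by a symplectic involution is birational to $\widetilde{K}_v$. One shows $\underline{Y}_v$ is birational to a moduli space $M_w(S,L)$ on a K3 surface $S$ (via a Hodge isometry of transcendental lattices and Morrison's realization result), so $\mathfrak{h}(\underline{Y}_v)\in\langle\mathfrak{h}(S)\rangle$ by Rie{\ss} and B\"ulles, and then proves $S$ is \emph{isogenous} to the Kummer K3 of $A$ --- the key numerical input being a comparison of Fujiki constants ($c=15$ for K3$^{[3]}$, $c=60$ for OG6) which forces the pushforward along the double cover to multiply the Beauville--Bogomolov form by $2$. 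Huybrechts' theorem on motives of isogenous K3 surfaces then puts $\mathfrak{h}(\underline{Y}_v)$, and hence $\mathfrak{h}(\widetilde{K}_v)$, inside $\langle\mathfrak{h}(A)\rangle$. None of this appears in your plan, and without it the blow-up bookkeeping has nothing to anchor to: the blow-up formula expresses $\mathfrak{h}(\widetilde{K}_v)$ in terms of the motive of the \emph{singular} variety $K_v$ (to which the smooth blow-up formula does not even apply directly), which is exactly the unknown.

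Second, the explicit formula is not obtained, and cannot be obtained, as a geometric stratification of the resolution. The smooth locus of $K_v$ is not birational to $\Km(A)^{(3)}$, and the summands $\mathfrak{h}(\Km(A)\times\Km(A))(-1)$, $\mathfrak{h}(\Km(A\times A))(-1)$, $137\,\mathfrak{h}(\Km(A))(-2)$ are not the motives of exceptional loci of blow-ups (the actual centers are $256$ points, $256$ quadric threefolds, $256$ copies of $\mathbb{P}^3$, and $\mathrm{Bl}_{A[2]\times\hat A[2]}((A\times\hat A)/\pm1)$, which contribute only Tate twists of $\mathfrak{h}^{+}(A\times\hat A)$ and of $\mathsf{Q}$). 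In the paper the formula is proved indirectly: one writes down the candidate motive $Z$, matches its Hodge realization with the known LLV description of $H^{\bullet}(\widetilde{K}_v,\QQ)$ from \cite{green2019llv}, uses the Hodge conjecture for powers of $A$ to lift the resulting Hodge isomorphism to a morphism of Chow motives, and concludes by conservativity of the realization functor on Abelian motives --- which is only available once the qualitative statement is established. So your proposed ``careful bookkeeping'' of the coefficients $137$ and $512$ is aimed at a decomposition that does not exist geometrically, and the logical order of your plan (formula via stratification, qualitative statement as a corollary of \cite{FFZ}) is the reverse of what can actually be made to work.
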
 
    In the above formula, given an Abelian variety $B$, we denote by $\Km(B)$ the quotient~$B/\pm 1$; the rational Chow motive of $\Km(B)$ is defined as the motive $(B, \gamma, 0)$ with $\gamma=\frac{1}{2}(\delta+\delta^-)$, where $\delta$ (resp.\ $\delta^{-}$) is the diagonal (resp.\ the graph of $-1$) in~$B\times B$. 
    The motive of any (symmetric) power of $\Km(B)$ is defined similarly.  
    
    The proof of the theorem is based on the construction of Mongardi--Rapagnetta--Sacc\`a~\cite{MRS18} of $\widetilde{K}_v(A,H)$ as resolution of the quotient of a variety $\underline{Y}_{v}(A,H)$ of~K3$^{[3]}$-type by a symplectic birational involution, which was already used in \textit{loc.\ cit.\ }to calculate the Hodge numbers of OG$6$ varieties. The key point is then to show that $\underline{Y}_v(A,H)$ is motivated by $A$. To this end, we first observe that $\underline{Y}_v(A,H)$ is in fact birational to a moduli space of stable sheaves on a K3 surface $S$, and then we prove that $S$ is isogenous to the Kummer K3 surface obtained as minimal resolution of $\Km(A)$. 

	Theorem \ref{thm:main} has the following direct consequences.
	\begin{corollary}\label{cor:main}
		With $A$, $v$ and $H$ as in Theorem \ref{thm:main}, we have:
		\begin{itemize}
			\item the Chow motive of the OG$6$ variety $\widetilde{K}_v(A,H)$ is Abelian and finite dimensional;
			\item the conjectures of Hodge and Tate hold for $\widetilde{K}_v(A,H)$ and all of its self-products.
		\end{itemize}
	\end{corollary}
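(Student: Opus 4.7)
The plan is to deduce both assertions as formal consequences of Theorem \ref{thm:main}, which places $\h(\widetilde{K}_v(A,H))$ inside the pseudo-Abelian tensor subcategory $\langle \h(A)\rangle_{\mathsf{CHM}}$ generated by the motive of the Abelian surface $A$.

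For the first bullet, I would invoke two standard stability properties. By definition the motive of an Abelian variety is an Abelian motive, and by the Kimura--O'Sullivan theorem it is finite dimensional. Both properties are preserved under direct sums, tensor products, Tate twists, and passage to direct summands inside $\mathsf{CHM}$. Consequently every object of $\langle \h(A)\rangle_{\mathsf{CHM}}$ inherits them, and in particular so does $\h(\widetilde{K}_v(A,H))$.

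For the second bullet, the key observation is that membership in $\langle \h(A)\rangle_{\mathsf{CHM}}$ implies that, for every $n\geq 1$, the motive $\h(\widetilde{K}_v(A,H)^n)$ is cut out by an algebraic Chow idempotent from a Tate twist of $\h(A^N)$ for some $N=N(n)$. Any Hodge (respectively Tate) class on a self-product of $\widetilde{K}_v(A,H)$ is therefore the image, under an algebraic correspondence, of a corresponding class on a suitable power of $A$. I would then appeal to the known validity of the Hodge conjecture for arbitrary self-products of a complex Abelian surface, and --- under the assumption that $A$ is defined over a finitely generated field --- of the Tate conjecture for arbitrary self-products of such an Abelian surface. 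Both are established in the literature by combining the classification of Hodge/Mumford--Tate groups of Abelian surfaces, Faltings's results on endomorphisms, and an explicit treatment of the exceptional Hodge classes. Transporting the resulting algebraicity through the algebraic correspondences yields the desired conclusion for $\widetilde{K}_v(A,H)$ and all of its self-products.

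The only non-formal ingredient in the corollary is thus the known Hodge and Tate conjectures for powers of a complex Abelian surface; this is where the only potential obstacle lies, and it is a matter of confirming that the relevant results in the literature cover the full generality needed. All the geometric content of the corollary has already been absorbed into Theorem \ref{thm:main}.
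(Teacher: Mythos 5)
Your proposal is correct, and the first bullet is argued exactly as in the paper: membership in $\langle \h(A)\rangle_{\mathsf{CHM}}$ makes the motive Abelian by definition, and finite dimensionality follows from Kimura's theorem \cite{Kimura} together with the stability of finite dimensionality under sums, tensor products and direct summands. For the Hodge conjecture your transport argument is also the paper's: it cites \cite{moonenZarhin} for all self-products of $A$ and then \cite[Lemma~4.2]{Ara06}, which is precisely the formalization of ``push Hodge classes through the algebraic idempotent cutting $\h(\widetilde{K}_v(A,H)^n)$ out of a twist of $\h(A^N)$.'' Where you genuinely diverge is the Tate conjecture. You propose to run the same transport argument $\ell$-adically, invoking the Tate conjecture for all powers of an Abelian surface; this works, but it requires (a) spreading out the relevant correspondences to a model over a finitely generated field, and (b) the Tate conjecture for arbitrary powers of an Abelian surface, which one deduces from the Mumford--Tate conjecture for Abelian surfaces combined with the fact that all Hodge classes on such powers are spanned by divisor classes and Faltings's theorem --- so your ``confirm the literature covers this'' caveat is resolvable but not entirely free. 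The paper instead short-circuits this: it quotes \cite[Theorem~1.18]{FFZ}, the Mumford--Tate conjecture for OG$6$-type varieties themselves, which makes the Hodge and Tate conjectures equivalent for $\widetilde{K}_v(A,H)$ and its self-products, and then the Tate conjecture falls out of the already-established Hodge conjecture with no further descent argument. Your route is more self-contained on the hyper-K\"ahler side (it never needs a Mumford--Tate statement for OG$6$ varieties) at the cost of needing the full $\ell$-adic input on the Abelian-surface side; the paper's route is the reverse trade.
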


By the Tate conjecture for a smooth and projective complex variety $X$ we mean the Tate conjecture for any model $Y$ over a field $L\subset \CC$ finitely generated over $\QQ$ such that $Y\otimes_{L} \CC \cong X$; by \cite[\S1.6]{moonen2017}, its validity does not depend on the choice of the model $Y$.
In the last section we explain how our result gives some evidence towards the conjecture that isogenous hyper-K\"{a}hler varieties should have isomorphic motives.

	\subsection*{Notation and conventions}
	We work over the field of the complex numbers. We let $\mathsf{CHM}$ be the category of Chow motives with rational coefficients over~$\CC$ (\emph{cf.}~\cite{andre}). 
	The motive of a smooth and projective variety~$X$ is denoted by~$\mathfrak{h}(X)$. We denote by~$\mathsf{Q}$ the unit object for the tensor product in $\mathsf{CHM}$, and by $\mathsf{Q}(-1)$ the unique motive such that $\mathfrak{h}(\mathbb{P}^1)=\mathsf{Q}\oplus \mathsf{Q}(-1)$. 
	The dual of $\mathsf{Q}(-1)$ is denoted by~$\mathsf{Q}(1)$; given a motive $M$ and an integer $n>0$ we let $M(\pm n)$ denote $M\otimes \mathsf{Q}(\pm 1)^{\otimes n}$. 
	The (thick) pseudo-Abelian tensor subcategory of~$\mathsf{CHM}$ generated by a motive $M$ is denoted by~$\langle M \rangle_{\mathsf{CHM}} $: by definition, it is the full subcategory of $\mathsf{CHM}$ containing $M$ which is closed under isomorphisms, direct sums, tensor products, duals and subobjects.
	
    \subsection*{Aknowledgements}
    I am grateful to Lie Fu for many exchanges and for his comments on a first draft of this note. I thank Giuseppe Ancona and Domenico Valloni for useful discussions. I thank the anonymous referee for his/her comments.
	
	\section{The construction of Mongardi--Rapagnetta--Sacc\`a}
	
	In this section we describe the construction of the article \cite{MRS18}, to which we refer for proofs and more details. 
	Let $A,v,H$ be as in Theorem \ref{thm:main}. We denote by $\hat{A}$ the dual of the Abelian surface $A$. The singular projective moduli space $M_v(A,H)$ admits a smooth proper morphism
	\[
	a\colon M_v(A,H)\to A\times \hat{A}.
	\]
	In fact, $a$ is isotrivial (see \cite{Yos01}): denoting by $K_v(A,H)$ its fibre over any point, after an \'etale base-change $M_v(A,H)$ splits as the product of $K_v(A,H)$ with $A\times \hat{A}$. 	
	
 We briefly recall the geometry of the singular variety $K_{v}(A,H)$ and of its symplectic resolution $\widetilde{K}_v(A,H)$, following \cite{O'G03} and \cite{MRS18}. Let us now simply write $K$ for $K_v(A,H)$.
	We have closed subvarieties $\Omega \subset \Sigma \subset K$ such that
	\begin{itemize}
		\item $\Sigma\cong (A\times \hat{A})/\pm 1$ is the singular locus of $K$;
		\item $\Omega$ is the singular locus of $\Sigma$, and consists of $256$ points.
	\end{itemize}
	
	The resolution $\widetilde{K}\to K$ is constructed as follows.
	The blow-up of $K$ with center $\Omega$ yields a projective variety $\overline{K}\coloneqq \mathrm{Bl}_{\Omega}(K)$, with singular locus the strict transform $\overline{\Sigma}$ of $\Sigma$. 
	The blow-up of $\overline{K}$ with center $\overline{\Sigma}$ is a smooth projective variety $\widehat{K}\coloneqq \mathrm{Bl}_{\overline{\Sigma}}(\overline{K})$. Finally, the pre-image $\widehat{\Omega}\subset \widehat{K}$ of ${\Omega}$ can be contracted as a $\mathbb{P}^2$-bundle to $256$ disjoint smooth quadric threefolds, obtaining the OG$6$ variety~$\widetilde{K}$. 
	
	In fact, by \cite{LS06} the resolution $\widetilde{K}$ can be obtained via a single blow-up $\widetilde{K}=\mathrm{Bl}_{\Sigma}(K)$ along the (reduced) singular locus $\Sigma\subset K$. 
	We then have $\widehat{K}=\mathrm{Bl}_{\widetilde{\Omega}}(\widetilde{K})$ where $\widetilde{\Omega}\subset \widetilde{K}$ is the pre-image of $\Omega$.
	To summarize, we have a commutative diagram
	\begin{equation}\label{eq:diagramK}
	\begin{tikzcd}
		& \mathrm{Bl}_{\overline{\Sigma}}(\overline{K})=\widehat{K} =\mathrm{Bl}_{\widetilde{\Omega}}(\widetilde{K}) \arrow{ld} \arrow{rd} \\
		\overline{K}=\mathrm{Bl}_{\Omega}(K)  \arrow{dr} && \widetilde{K}=\mathrm{Bl}_{\Sigma}(K) \arrow{dl} \\
		& K
	\end{tikzcd}
	\end{equation}
where:
	\begin{itemize} 
	\item $\overline{\Sigma}\subset \overline{K}$ is smooth and isomorphic to $\mathrm{Bl}_{A[2]\times  \hat{A}[2]}\bigl((A\times \hat{A})/\pm 1\bigr)$;
	\item $\widetilde{\Omega}\subset \widetilde{K}$ is the disjoint union of $256$ smooth $3$-dimensional quadrics.
	\end{itemize}
	
	The main result of \cite{MRS18} gives  a normal projective variety $Y=Y_v(A,H)$ and a finite morphism $\epsilon\colon Y\to K$ of degree $2$. The branch locus of $\epsilon $ is $\Sigma$; the ramification locus $\Delta\subset Y$ is isomorphic to $\Sigma$. We let $\Gamma\subset \Delta$ be its singular locus, that is, $\Gamma=\epsilon^{-1}(\Omega)$ consists of~$256$ points. 
	We then have a commutative diagram of blow-ups
		\begin{equation}\label{eq:diagramY}
	\begin{tikzcd}
		& \mathrm{Bl}_{\overline{\Delta}}(\overline{Y})=\widehat{Y} =\mathrm{Bl}_{\widetilde{\Gamma}}(\widetilde{Y}) \arrow{ld} \arrow{rd} 
		\\
		\overline{Y}=\mathrm{Bl}_{\Gamma}(Y)  \arrow{dr} 
		&& \widetilde{Y}= \mathrm{Bl}_{\Delta}(Y) \arrow{dl} 
		\\
		& Y 
	\end{tikzcd}
	\end{equation}
	
	The morphism $\epsilon$ induces morphisms 
	 $\hat{\epsilon}\colon \widehat{Y} \to \widehat{K}$, $\overline{\epsilon}\colon \overline{Y} \to \overline{K}$, $\tilde{\epsilon}\colon \widetilde{Y}\to \widetilde{K}$ of degree~$2$, compatible with all blow-up maps in the diagrams \eqref{eq:diagramK} and \eqref{eq:diagramY}. In particular, these double covers are ramified over subvarieties denoted $\widehat{\Delta}$, $\overline{\Delta}$ and $\widetilde{\Delta}$ respectively, and they induce isomorphisms: $\hat{\epsilon}\colon \widehat{\Delta}\xrightarrow{\ \sim \ } \widehat{\Sigma}$, $\overline{\epsilon}\colon \overline{\Delta}\xrightarrow{\ \sim \ } \overline{\Sigma}$ and $\tilde{\epsilon}\colon \widetilde{\Delta}\xrightarrow{\ \sim \ }\widetilde{\Sigma}$.
	
	Moreover, the authors show that the variety $\overline{Y}$ is regular, and that the pre-image~$\overline{\Gamma}\subset \overline{Y}$ of $\Gamma$ can be contracted as a~$\mathbb{P}^2$-bundle to~$256$ copies of $\mathbb{P}^3$. The resulting variety $\underline{Y}$ is also regular, and it is a hyper-K\"{a}hler variety of $ \mathrm{K3}^{[3]}$-type. It contains~$256$ copies of $\mathbb{P}^3$; denoting their union by~$\underline{\Gamma}$, we have $\overline{Y}=\mathrm{Bl}_{\underline{\Gamma}}(\underline{Y})$.

   To summarize, for $A,v,H$ as above, we have a variety $\underline{Y}=\underline{Y}_v(A,H)$ of K$3^{[3]}$-type with a rational map of degree $2$ to the OG$6$-type variety $\widetilde{K}=\widetilde{K}_{v}(A,H)$, which is resolved via the following diagram:
   \begin{equation}\label{eq:diagram}
   	\begin{tikzcd}
   		\widehat{Y} = \mathrm{Bl}_{\overline{\Delta}}(\overline{Y}) \arrow{d} \arrow{drr}{\hat{\epsilon}}\\
   		\overline{Y}=\mathrm{Bl}_{\underline{\Gamma}}(\underline{Y}) \arrow{d} && \widehat{K}=\mathrm{Bl}_{\widetilde{\Omega}} (\widetilde{K}) \arrow{d} \\
   		\underline{Y} \arrow[dashed]{rr}   && \widetilde{K} 
   	\end{tikzcd}
   \end{equation}
   
   All varieties involved are smooth and projective. The morphism $\hat{\epsilon}$ is a degree $2$ cover ramified over the divisor $\widehat{\Sigma}$. The other maps are blow-ups with smooth centers:
   \begin{itemize}
   	\item $\widetilde{\Omega}$ consists of $256 $ disjoint smooth $3$-dimensional quadrics $G$;
   	\item $\underline{\Gamma}$ is a union of $256$ disjoint copies of $\mathbb{P}^3$;
   	\item $\overline{\Delta}\cong \mathrm{Bl}_{A[2]\times \hat{A}[2]}\bigl((A\times \hat{A})/\pm 1\bigr)$.
   \end{itemize}

	\section{The K3$^{[3]}$ variety as a moduli space}  \label{sec:modulispace}
	
	We denote by $H^2_{\tr}(X,\ZZ)\subset H^2(X,\ZZ)$ the transcendental part of the second Hodge structure of a smooth and projective variety $X$; by definition, $H^2_{\tr}(X,\ZZ)$ is the smallest $\ZZ$-Hodge substructure of $H^2(X,\ZZ)$ whose complexification contains $H^{2,0}(X)$.
	
	\begin{lemma}\label{lem:trascendental}
		Let $f\colon X \to Z $ be a birational map. Then the $\ZZ$-Hodge structures $H^2_{\tr}(X,\ZZ)$ and $H^2_{\tr}(Z,\ZZ)$ are isomorphic.
	\end{lemma}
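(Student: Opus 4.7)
The plan is to combine the weak factorization theorem of Abramovich--Karu--Matsuki--W\l odarczyk with the blow-up formula for $H^2$. Weak factorization lets one factor any birational map $f\colon X \to Z$ between smooth projective varieties as a zig-zag of blow-ups and blow-downs along smooth, irreducible centers, so it suffices to check that $H^2_{\tr}(-, \ZZ)$ is preserved by a single blow-up $\pi\colon X' = \mathrm{Bl}_Y X \to X$ with smooth center $Y$ of codimension at least two (the codimension-one case is trivial).

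In that situation, the blow-up formula yields a decomposition of integral Hodge structures
\[
H^2(X', \ZZ) = \pi^{*} H^2(X, \ZZ) \oplus \ZZ\cdot [E],
\]
where $E\subset X'$ is the exceptional divisor. The extra summand is spanned by the algebraic class $[E]$, so it is of Hodge type $(1,1)$ and isomorphic to $\ZZ(-1)$; moreover $\pi^{*}$ induces an isomorphism $H^{2,0}(X) \xrightarrow{\sim} H^{2,0}(X')$.

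From this, one inclusion is immediate: $\pi^{*} H^2_{\tr}(X, \ZZ)$ is a sub-Hodge structure of $H^2(X', \ZZ)$ whose complexification contains $H^{2,0}(X')$, so by minimality $H^2_{\tr}(X', \ZZ) \subseteq \pi^{*} H^2_{\tr}(X, \ZZ)$. For the reverse inclusion, consider the projection $p\colon H^2(X', \ZZ) \to \ZZ(-1)$ onto the $[E]$-summand; its kernel intersected with $H^2_{\tr}(X', \ZZ)$ is a sub-Hodge structure whose complexification still contains $H^{2,0}(X')$ (which lies in the $\pi^{*}$-summand), so minimality of $H^2_{\tr}(X', \ZZ)$ forces $p$ to vanish there. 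Hence $H^2_{\tr}(X', \ZZ) = \pi^{*} W$ for some sub-Hodge structure $W\subseteq H^2(X, \ZZ)$ with $W_\CC \supseteq H^{2,0}(X)$, and minimality of $H^2_{\tr}(X, \ZZ)$ gives $W = H^2_{\tr}(X, \ZZ)$. No step here is genuinely hard; the only mildly delicate aspect is making the minimality argument work at the integral level, which goes through precisely because the extra summand in the blow-up formula is algebraic and thus cannot receive a non-zero map from a weight-two Hodge structure with non-trivial $(2,0)$-part that is minimal.
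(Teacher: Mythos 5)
Your proof is correct and follows exactly the route the paper takes: reduce via the weak factorization theorem of \cite{AKMW} to the case of a single blow-up with smooth center, which the paper dismisses as immediate and which you spell out via the blow-up formula and the minimality characterization of $H^2_{\tr}$. The details you supply for the blow-up step (the extra summand being algebraic of type $(1,1)$, and the two minimality arguments) are accurate.
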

\begin{proof}
	This is well-known: it is immediate when $f$ is a blow-up with smooth center, and the general case follows from this factoring $f$ as composition of blow-ups and blow-downs with smooth centers (\emph{cf.}~\cite{AKMW}).
\end{proof}

	When $X$ is a hyper-K\"{a}hler variety, $H^2_{\tr}(X,\ZZ)$ is the transcendental lattice of $X$: it is equipped with the restriction of the Beauville--Bogomolov form (\emph{cf.}~\cite{Huy99}). It has signature $(2, b_2-\rho-2)$ where $b_2$ and $\rho$ are the second Betti number and the Picard number of $X$ respectively. 
	Recall that the Beauville--Bogomolov form $q_X$ of a $2n$-dimensional hyper-K\"{a}hler manifold $X$ satisfies Fujiki's relation \cite{fujiki1987}: there exists a positive rational constant $c_X$ such that 
	\[
	\int_X \alpha^{2n} = c_X \cdot q_X(\alpha,\alpha)^n.
	\]
	
	Let $f\colon W\to X$ be a birational map with $X$ a hyper-K\"{a}hler variety of dimension~$2n$. By Lemma \ref{lem:trascendental} we have a Hodge isomorphism 
	\[
	H^2_{\tr}(W,\ZZ)\cong H^2_{\tr}(X,\ZZ).
	\]
	We then equip $H^2_{\tr}(W,\ZZ)$ with the form obtained from $H^2_{\tr}(X,\ZZ)$ via the above isomorphism and speak about the transcendental lattice of $W$.
	\begin{remark}\label{rmk:cfuji}
		The induced form $q_{W}$ on $H^2_{\tr}(W,\ZZ)$ satisfies Fujiki's relation
		\[
		\int_{W} \alpha^{2n} = c_X \cdot q_{W}(\alpha, \alpha)^{n},
		\]
		with the same Fujiki constant $c_X$ of $X$.
	\end{remark}

		Let $A,v,H$ be as in Theorem \ref{thm:main}. We use the notation from the previous section. 
		\begin{proposition}\label{prop:moduli}
			There exists a K3 surface $S$, a primitive and effective Mukai vector~$w$ and a $w$-generic polarization $L$ on $S$ such that the $\mathrm{K3}^{[3]}$-type variety $\underline{Y}_v(A,H)$ is birational to the smooth and projective moduli space $M_{w}(S,L)$.
		\end{proposition}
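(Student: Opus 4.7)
I plan to apply the global Torelli theorem for K$3^{[n]}$-type hyper-K\"{a}hler varieties (Markman, Huybrechts): to prove the birationality statement it suffices to produce a K$3$ surface $S$, a primitive Mukai vector $w$ with $(w,w)=4$, and a $w$-generic polarization $L$ on $S$ such that $H^2(M_w(S,L),\ZZ)$ and $H^2(\underline{Y}_v(A,H),\ZZ)$ are Hodge-isometric via a parallel-transport operator. Since the transcendental lattice of $M_w(S,L)$ is canonically $H^2_{\tr}(S,\ZZ)$, the heart of the matter is to identify $H^2_{\tr}(\underline{Y}_v(A,H),\ZZ)$ with its Beauville--Bogomolov form.

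The transcendental Hodge structure of $\underline{Y}$ is read off from diagram~\eqref{eq:diagram}. By Lemma~\ref{lem:trascendental}, the blow-ups in the diagram preserve transcendental $H^2$, giving $H^2_{\tr}(\underline{Y},\QQ)\cong H^2_{\tr}(\widehat{Y},\QQ)$ and $H^2_{\tr}(\widetilde{K},\QQ)\cong H^2_{\tr}(\widehat{K},\QQ)$. The covering involution of the double cover $\hat{\epsilon}\colon \widehat{Y}\to\widehat{K}$ is symplectic (as $\widetilde{K}$ is a symplectic resolution of an involution quotient on $\underline{Y}$), so $\hat{\epsilon}^*$ identifies $H^{2,0}(\widehat{K})$ with $H^{2,0}(\widehat{Y})$ and induces a Hodge isomorphism $H^2_{\tr}(\widehat{K},\QQ)\cong H^2_{\tr}(\widehat{Y},\QQ)$. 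The integral form is pinned down via Fujiki's relation (Remark~\ref{rmk:cfuji}) combined with $\deg(\hat{\epsilon})=2$: using the Fujiki constants $c_{\mathrm{OG6}}=60$ and $c_{\mathrm{K3}^{[3]}}=15$, the identity $15\cdot q_{\underline{Y}}(\hat{\epsilon}^*\alpha)^3 = 2\cdot 60\cdot q_{\widetilde{K}}(\alpha)^3$ forces $q_{\underline{Y}}(\hat{\epsilon}^*\alpha) = 2\, q_{\widetilde{K}}(\alpha)$. Combining this with the standard identification $H^2_{\tr}(\widetilde{K}_v(A,H),\ZZ)\cong H^2_{\tr}(A,\ZZ)$ for OG$6$-type desingularizations of moduli spaces on $A$, one obtains $H^2_{\tr}(\underline{Y},\ZZ)\cong H^2_{\tr}(A,\ZZ)(2)$.

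Since the Kummer K$3$ surface $\Km(A)$ also satisfies $H^2_{\tr}(\Km(A),\ZZ)\cong H^2_{\tr}(A,\ZZ)(2)$, I obtain a Hodge isometry $H^2_{\tr}(\underline{Y},\ZZ)\cong H^2_{\tr}(\Km(A),\ZZ)$. Taking $S=\Km(A)$, $w=(1,0,-2)\in\widetilde{H}(S,\ZZ)$ (primitive with $(w,w)=4$), and any $w$-generic polarization $L$, the moduli space $M_w(S,L)\cong S^{[3]}$ is a smooth projective K$3^{[3]}$-type hyper-K\"{a}hler variety whose transcendental $H^2$ agrees with that of $\underline{Y}$. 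To finish, I would extend this transcendental Hodge isometry to a Hodge isometry of the full $H^2$'s and apply Markman's global Torelli theorem. The principal obstacle is this last step: one must verify the isometry is \emph{orientation-preserving on the positive cones}, so as to be realized as a parallel-transport operator; this may require replacing $\Km(A)$ by another K$3$ surface with the same transcendental Hodge structure but a richer algebraic Mukai lattice (or choosing a different $w$) in order to match the algebraic part of $H^2(\underline{Y},\ZZ)$, which is exactly the flexibility that the weaker statement \textquotedblleft there exists $S$\textquotedblright{} affords.
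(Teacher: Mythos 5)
Your strategy agrees with the paper's up to the final step: both identify the transcendental lattice of $\underline{Y}$ via the double cover $\hat{\epsilon}$ and the Fujiki constants $c_{\mathrm{K3}^{[3]}}=15$, $c_{\mathrm{OG6}}=60$ (your computation with $\hat{\epsilon}^*$ is correct and consistent with the paper's computation with $\hat{\epsilon}_*$), and both then seek a K3 surface with matching transcendental lattice. However, there are two genuine gaps. First, the covering argument only produces a \emph{rational} Hodge isometry $H^2_{\tr}(\underline{Y},\QQ)[2]\cong H^2_{\tr}(\widetilde{K},\QQ)\cong H^2_{\tr}(A,\QQ)$: since $\hat{\epsilon}^*\hat{\epsilon}_*=2$ on invariant classes, neither $\hat{\epsilon}^*$ nor $\hat{\epsilon}_*$ is an isometry of integral lattices, so your claimed integral identification $H^2_{\tr}(\underline{Y},\ZZ)\cong H^2_{\tr}(A,\ZZ)(2)$ is unjustified. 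This is why you cannot simply take $S=\Km(A)$: the paper only establishes in Proposition \ref{prop:last} that the surface $S$ produced here is \emph{isogenous} to the Kummer surface, and must invoke Huybrechts' theorem \cite{huybrechtsMotives} on motives of isogenous K3 surfaces precisely because the integral transcendental lattices need not agree.

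Second, the step you yourself flag as ``the principal obstacle'' --- extending the transcendental isometry integrally to all of $H^2$ and realizing it as an orientation-compatible parallel-transport operator so that the global Torelli theorem applies --- is a real gap, and it is exactly what the paper's proof is built to avoid. The paper instead quotes the criterion of Addington \cite{Addington2016} (equivalently Mongardi--Wandel \cite{mongardi2015}): a K3$^{[3]}$-type variety is birational to a smooth projective moduli space of sheaves on a K3 surface as soon as its integral transcendental lattice is Hodge-isometric to that of \emph{some} K3 surface. It then produces such a surface abstractly via Morrison's realization theorem \cite{Morrison1984}, using only that $H^2_{\tr}(\underline{Y},\ZZ)$ is an even lattice of signature $(2,r-2)$ with $r\le 5$; the rank bound is the only information extracted from the rational isometry with $H^2_{\tr}(A,\QQ)$. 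To close your argument you should replace the explicit candidate $\Km(A)$ and the direct Torelli argument by this criterion together with Morrison's existence result; as written, the proposal does not yield the proposition.
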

	
    We first prove a lemma. Let us consider the diagram \eqref{eq:diagram}.
	By Lemma \ref{lem:trascendental}, we have identifications of transcendental lattices: $H^2_{\tr}(\widehat{Y},\ZZ) \cong H^2_{\tr}(\underline{Y},\ZZ)$ and $			H^2_{\tr}(\widehat{K},\ZZ)\cong H^2_{\tr}(\widetilde{K},\ZZ)$.
Let $\hat{\iota}\colon \widehat{Y}\to \widehat{Y}$ be the involution associated with the double cover $\hat{\epsilon}$. The involution~$\hat{\iota}$ is symplectic, {\it i.e.} its action on the cohomology is the identity on $H^2_{\tr}(\widehat{Y}, \ZZ)$.

\begin{lemma}\label{lem:hodgeisometry}
	Via the isometries above, the morphism $\hat{\epsilon}\colon \widehat{Y} \to \widehat{K}$ induces a rational Hodge isometry
	\[
	\hat{\epsilon}_*\colon H^2_{\tr}(\underline{Y},\QQ)[2] \xrightarrow{\ \sim\ } H^2_{\tr}(\widetilde{K}, \QQ),
	\]
	where $[2]$ indicates that the form is multiplied by a factor $2$.
\end{lemma}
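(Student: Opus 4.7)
The plan is to exploit that $\hat{\epsilon}$ is a degree-two finite morphism of smooth projective varieties whose covering involution $\hat{\iota}$ is symplectic. The two fundamental identities on rational cohomology are $\hat{\epsilon}_*\hat{\epsilon}^*=2\cdot\mathrm{id}$ and $\hat{\epsilon}^*\hat{\epsilon}_*=\mathrm{id}+\hat{\iota}^*$; combined with the fact that the symplectic involution $\hat{\iota}^*$ fixes the sub-Hodge structure $H^2_{\tr}(\widehat{Y},\QQ)$ pointwise (any involution preserving $H^{2,0}$ must act trivially on the smallest rational Hodge sub containing it), this essentially reduces the statement to a clean Hodge-theoretic and Fujiki-theoretic computation.

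First I would show that $\hat{\epsilon}^*$ restricts to a Hodge isomorphism from $H^2_{\tr}(\widehat{K},\QQ)$ onto $H^2_{\tr}(\widehat{Y},\QQ)$. Pull-back is injective on rational cohomology, and it sends the one-dimensional $H^{2,0}(\widehat{K})$ isomorphically onto $H^{2,0}(\widehat{Y})$. Since $\hat{\iota}^*$ acts as the identity on $H^2_{\tr}(\widehat{Y},\QQ)$, we have
\[
H^2_{\tr}(\widehat{Y},\QQ)\subseteq H^2(\widehat{Y},\QQ)^{\hat{\iota}^*}=\hat{\epsilon}^*H^2(\widehat{K},\QQ),
\]
so there exists a rational Hodge sub-structure $W\subseteq H^2(\widehat{K},\QQ)$ with $\hat{\epsilon}^*W=H^2_{\tr}(\widehat{Y},\QQ)$; it contains $H^{2,0}(\widehat{K})$, hence contains $H^2_{\tr}(\widehat{K},\QQ)$. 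Matching $(2,0)$-parts forces $W=H^2_{\tr}(\widehat{K},\QQ)$. Composing with $\hat{\epsilon}_*\hat{\epsilon}^*=2\cdot\mathrm{id}$ then exhibits $\hat{\epsilon}_*$ as $2$ times the inverse of $\hat{\epsilon}^*$, giving a Hodge isomorphism $H^2_{\tr}(\widehat{Y},\QQ)\xrightarrow{\sim}H^2_{\tr}(\widehat{K},\QQ)$, which via Lemma 3.1 is the required isomorphism $H^2_{\tr}(\underline{Y},\QQ)\xrightarrow{\sim}H^2_{\tr}(\widetilde{K},\QQ)$.

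To extract the scaling factor of $2$ on the form, I would combine the projection formula with Fujiki's relation (Remark 2.4). For $\alpha\in H^2_{\tr}(\widehat{Y},\QQ)$, using $\hat{\epsilon}^*\hat{\epsilon}_*\alpha=2\alpha$,
\[
\int_{\widehat{K}}(\hat{\epsilon}_*\alpha)^6=\int_{\widehat{Y}}\alpha\cdot\hat{\epsilon}^*(\hat{\epsilon}_*\alpha)^5=\int_{\widehat{Y}}\alpha\cdot(2\alpha)^5=32\int_{\widehat{Y}}\alpha^6.
\]
Applying Fujiki's relation on each side yields $c_{\widetilde{K}}\,q_{\widetilde{K}}(\hat{\epsilon}_*\alpha)^3=32\,c_{\underline{Y}}\,q_{\underline{Y}}(\alpha)^3$. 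The Fujiki constants $c_{\mathrm{K3}^{[3]}}=15$ and $c_{\mathrm{OG}6}=60$ differ by the factor $4$, so the cube of the ratio $q_{\widetilde{K}}(\hat{\epsilon}_*\alpha)/q_{\underline{Y}}(\alpha)$ equals $8$. Because both $q_{\widetilde{K}}\circ\hat{\epsilon}_*$ and $q_{\underline{Y}}$ are rational quadratic forms on the same $\QQ$-vector space, their ratio on any vector where $q_{\underline{Y}}$ does not vanish is a rational number, and the only rational cube root of $8$ is $2$.

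The principal obstacle I anticipate is not the numerics but the careful bookkeeping of the several birational and Hodge-theoretic identifications in the diagram: one must verify that the isomorphisms of transcendental lattices provided by Lemma 3.1 fit compatibly through the degree-two cover $\hat{\epsilon}$, so that both the Hodge structure and the Beauville--Bogomolov form are correctly transported. Once this compatibility is installed, the projection formula together with the symplectic nature of $\hat{\iota}$ and the known Fujiki constants pin down the factor of $2$ with no further choices.
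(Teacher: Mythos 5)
Your proposal is correct and follows essentially the same route as the paper: both establish the Hodge isomorphism on transcendental parts from the degree-two cover identities (the paper via injectivity of $\hat{\epsilon}_*$ on the $\hat{\iota}$-invariants, you via $\hat{\epsilon}^*$ and minimality of the transcendental part), and both pin down the scaling factor by comparing $\int_{\widehat{K}}(\hat{\epsilon}_*y)^6$ with $32\int_{\widehat{Y}}y^6$ through Fujiki's relation and the constants $c_{\mathrm{K3}^{[3]}}=15$, $c_{\mathrm{OG}6}=60$. Your extra care about the compatibility of the birational identifications and the uniqueness of the rational cube root of $8$ matches what the paper handles via Remark \ref{rmk:cfuji} and the final line of its computation.
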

\begin{proof}
	For any $y\in H^2(\widehat{Y}, \QQ)^{\hat{\iota}}$, we have $\hat{\epsilon}^*\hat{\epsilon}_*(y)=2y$. Thus, $\epsilon_*$ is injective on~$H^2(\widehat{Y}, \QQ)^{\hat{\iota}}$ and hence it induces an isomorphism $\hat{\epsilon}_*\colon H_{\tr}^2(\widehat{Y}, \QQ)\cong H^2_{\tr}(\widehat{K}, \QQ)$ of Hodge structures. 
	For any $y \in H^2(\widehat{Y}, \QQ)^{\hat{\iota}}$, we then have (see Remark \ref{rmk:cfuji})
	\[
    q_{\widehat{K}}(\hat{\epsilon}_*(y), \hat{\epsilon}_*(y))^{3} 
	= \frac{1}{c_{\widetilde{K}}} \int_{\widehat{K}} \hat{\epsilon}_* (y)^{6}
	  = \frac{1}{2c_{\widetilde{K}}} \int_{\widehat{Y}} \hat{\epsilon}^*\hat{\epsilon}_*(y)^{6}
	= 2^5 \frac{c_{\underline{Y}}}{c_{\widetilde{K}}} \Bigl(\dfrac{1}{c_{\underline{Y}}} \int_{\hat{Y}} y^{6}\Bigr)
    = 2^5 \frac{c_{\underline{Y}}}{c_{\widetilde{K}}} \cdot q_{\widehat{Y}}(y,y)^3.
	\]
	The Fujiki constants of the known deformation types of hyper-K\"{a}hler manifolds can be found in \cite{rapagnetta2008beauville}. Since $\underline{Y}$ is of K3$^{[3]}$-type and $\widetilde{K}$ is of OG$6$-type we have $c_{\underline{Y}}=15$ and $c_{\widetilde{K}}=60$.
	We therefore obtain 
	$
	q_{\widetilde{K}}(\epsilon_* (y), \epsilon_*(y))=2 q_{\underline{Y}}(y,y)
	$, for any $y\in H^2_{\tr}(\underline{Y},\QQ)$.
\end{proof}

\begin{remark}\label{rem:H2}
In \cite{PR13} it is shown that there is a Hodge isometry $$H^2(K_{v}(A,H), \ZZ) \xrightarrow{\ \sim\ } v^{\bot} \subset \widetilde{H}^2(A,\ZZ),$$ and that $H^2(K_{v}(A,H), \ZZ)$ embeds isometrically into $H^2(\widetilde{K}_{v}(A,H), \ZZ)$ as Hodge substructure. It follows that we have a Hodge isometry $H^2_{\tr}(\widetilde{K}, \ZZ)\xrightarrow{\ \sim\ } H^2_{\tr}(A,\ZZ)$.
\end{remark}

\begin{proof}[Proof of Proposition \ref{prop:moduli}]
	According to \cite[Proposition 4]{Addington2016} (or equivalently \cite[Proposition~3.3]{mongardi2015}), it suffices to show that there exists a $\mathrm{K}3$ surface $S$ and a Hodge isometry $H_{\tr}^2(\underline{Y}, \ZZ)\cong H_{\tr}^2(S,\ZZ)$ of transcendental lattices. As the transcendental lattice of an Abelian surface has rank at most $5$, by Lemma~\ref{lem:hodgeisometry} and Remark~\ref{rem:H2} we obtain that $H^2_{\tr}(\underline{Y},\ZZ)$ is an  even lattice of rank~$r\leq 5$ and signature $(2, r-2)$. By~\cite[Corollary~2.10]{Morrison1984} any such lattice appears as transcendental lattice of some K3 surface.  
\end{proof}

	\section{Conclusion of the proof}
 
   Let $A,v,H$ be as in the statement of Theorem \ref{thm:main} and let $\underline{Y}=\underline{Y}_v(A,H)$ be the $\mathrm{K}3^{[3]}$-type variety associated to $\widetilde{K}=\widetilde{K}_v(A,H)$.
 
\begin{lemma}\label{lemma:easy}
	We have $\mathfrak{h}(\widetilde{K})\in \langle \mathfrak{h}(\underline{Y})\oplus \mathfrak{h}(A)\rangle_{\mathsf{CHM}}$.
\end{lemma}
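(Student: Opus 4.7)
The plan is to propagate motives through the diagram~\eqref{eq:diagram} using three elementary tools: (i) the blow-up formula $\mathfrak{h}(\mathrm{Bl}_Z X)\cong\mathfrak{h}(X)\oplus\bigoplus_{i=1}^{c-1}\mathfrak{h}(Z)(-i)$ for a smooth closed center $Z\subset X$ of codimension $c$; (ii) for a finite surjective morphism $f\colon X\to Y$ of degree $d$ between smooth projective varieties, $\mathfrak{h}(Y)$ is a direct summand of $\mathfrak{h}(X)$, cut out by the projector $\tfrac{1}{d}f^*f_*$; (iii) projective spaces and smooth quadric hypersurfaces have motive in $\langle\mathsf{Q}\rangle_{\mathsf{CHM}}$.

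Concretely, I first apply (i) to $\overline{Y} = \mathrm{Bl}_{\underline{\Gamma}}(\underline{Y})$, whose center is a disjoint union of $\mathbb{P}^3$'s, to conclude $\mathfrak{h}(\overline{Y})\in\langle\mathfrak{h}(\underline{Y})\rangle_{\mathsf{CHM}}$, and then to $\widehat{Y} = \mathrm{Bl}_{\overline{\Delta}}(\overline{Y})$ to write $\mathfrak{h}(\widehat{Y}) = \mathfrak{h}(\overline{Y})\oplus\mathfrak{h}(\overline{\Delta})(-1)$. Next, (ii) applied to the degree-$2$ cover $\hat{\epsilon}\colon\widehat{Y}\to\widehat{K}$ gives $\mathfrak{h}(\widehat{K})\in\langle\mathfrak{h}(\widehat{Y})\rangle_{\mathsf{CHM}}$. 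Finally, (i) applied to $\widehat{K} = \mathrm{Bl}_{\widetilde{\Omega}}(\widetilde{K})$, whose center is a disjoint union of smooth quadric threefolds, exhibits $\mathfrak{h}(\widetilde{K})$ as a direct summand of $\mathfrak{h}(\widehat{K})$. Chaining these inclusions, the statement reduces to showing
\[
\mathfrak{h}(\overline{\Delta})\in\langle\mathfrak{h}(A)\rangle_{\mathsf{CHM}}.
\]

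For this remaining step the plan is to identify $\overline{\Delta}$ with the smooth quotient $\tilde{B}/\tilde{\sigma}$, where $B\coloneqq A\times\hat{A}$, $\tilde{B}\coloneqq\mathrm{Bl}_{B[2]}(B)$, and $\tilde{\sigma}$ is the lift to $\tilde{B}$ of the involution $-1_B$. A local coordinate computation shows that $\tilde{\sigma}$ fixes each exceptional $\mathbb{P}^3\subset\tilde{B}$ pointwise (as $-1$ on a tangent space projectivizes to the identity) and acts by $-1$ on the normal line, so $\tilde{B}/\tilde{\sigma}$ is smooth; the universal property of the blow-up then identifies this quotient with $\mathrm{Bl}_{B[2]}(B/\pm 1)\cong\overline{\Delta}$. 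Smoothness of the quotient gives $\mathfrak{h}(\overline{\Delta}) = \mathfrak{h}(\tilde{B})^{\tilde{\sigma}}$, and combining (i) with the $\tilde{\sigma}$-invariance of the exceptional contributions yields
\[
\mathfrak{h}(\overline{\Delta})\cong\mathfrak{h}(\Km(A\times\hat{A}))\oplus 256\bigl(\mathsf{Q}(-1)\oplus\mathsf{Q}(-2)\oplus\mathsf{Q}(-3)\bigr).
\]
Any polarization of $A$ is an isogeny $A\to\hat{A}$, so $\mathfrak{h}(\hat{A})\cong\mathfrak{h}(A)$ in $\mathsf{CHM}$ and thus $\mathfrak{h}(A\times\hat{A})\in\langle\mathfrak{h}(A)\rangle_{\mathsf{CHM}}$; since $\mathfrak{h}(\Km(A\times\hat{A}))$ is by the introduction's definition a direct summand of $\mathfrak{h}(A\times\hat{A})$, both summands above lie in $\langle\mathfrak{h}(A)\rangle_{\mathsf{CHM}}$, as required. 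The only non-formal input in the whole argument is the geometric identification $\overline{\Delta}\cong\tilde{B}/\tilde{\sigma}$; once this is granted, all the motivic manipulations reduce to bookkeeping with the three tools above.
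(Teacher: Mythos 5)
Your proposal is correct and follows essentially the same route as the paper: both arguments propagate motives through diagram~\eqref{eq:diagram} via the blow-up formula, handle the degree-$2$ cover $\hat{\epsilon}$ by passing to the $\hat{\iota}$-invariant summand (your projector $\tfrac{1}{2}\hat{\epsilon}^*\hat{\epsilon}_*$ is the same device), and reduce everything to $\mathfrak{h}(\overline{\Delta})\cong\mathfrak{h}^+(A\times\hat{A})\oplus 256\bigl(\mathsf{Q}(-1)\oplus\mathsf{Q}(-2)\oplus\mathsf{Q}(-3)\bigr)$ using $\overline{\Delta}\cong\mathrm{Bl}_{A[2]\times\hat{A}[2]}\bigl((A\times\hat{A})/\pm 1\bigr)$. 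The only differences are presentational: you re-derive the Kummer-type identification of $\overline{\Delta}$ (which the paper imports from \cite{MRS18}) and you make explicit the isogeny $A\to\hat{A}$ giving $\mathfrak{h}(\hat{A})\cong\mathfrak{h}(A)$, a point the paper leaves implicit.
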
 
\begin{proof}
	We follow the same path as for the calculation of Hodge numbers \cite{MRS18} and Hodge structures \cite{green2019llv}. A similar argument is used in \cite[\S4.4]{fuLi2020}. 
	
	We consider the diagram \eqref{eq:diagram}. The motive of a smooth $3$-dimensional quadric is $\mathsf{Q} \oplus \mathsf{Q}(-1)\oplus \mathsf{Q}(-2) \oplus \mathsf{Q}(-3)$. Since $\overline{\Delta}=(\mathrm{Bl}_{A[2]\times \hat{A}[2]}(A\times \hat{A}))/\pm 1$, we have 
	\begin{align*}
		\mathfrak{h}(\overline{\Delta}) &= \Bigl(\mathfrak{h}(A\times \hat{A}) \oplus 256 \mathsf{Q}(-1) \oplus 256 \mathsf{Q}(-2) \oplus 256\mathsf{Q}(-3)\Bigr)^{\pm 1}\\
		&= \mathfrak{h}^+(A\times \hat{A})\oplus 256 \mathsf{Q}(-1) \oplus 256 \mathsf{Q}(-2) \oplus 256\mathsf{Q}(-3).
	\end{align*}
	Here, $\mathfrak{h}^+(A\times \hat{A})$ denotes the even part of the rational motive of $A\times \hat{A}$, which is well-defined since Abelian varieties have canonical Chow--K\"{u}nneth decompositions (see~\cite{deninger}).
	
	Applying the blow-up formula for motives (from \cite{Man68}), we obtain
	\begin{align*}
		\mathfrak{h}(\widehat{K})&=\mathfrak{h}(\widetilde{K}) \oplus 256 \mathsf{Q}(-1) \oplus 512 \mathsf{Q}(-2) \oplus 512 \mathsf{Q}(-3) \oplus 512 \mathsf{Q}(-4) \oplus 256 \mathsf{Q}(-5);
	\end{align*}
moreover, we have
	\begin{align*}
		\mathfrak{h}(\widehat{Y}) &= \mathfrak{h}(\overline{Y}) \oplus \mathfrak{h}^+(A\times \hat{A})(-1) \oplus 256 \mathsf{Q}(-2) \oplus 256 \mathsf{Q}(-3) \oplus 256\mathsf{Q}(-4),
	\end{align*}
	and
	\begin{align*}
		\mathfrak{h}(\overline{Y}) &= \mathfrak{h}(\underline{Y})\oplus 256 \mathsf{Q}(-1) \oplus 512 \mathsf{Q}(-2) \oplus 512 \mathsf{Q}(-3) \oplus 512 \mathsf{Q}(-4) \oplus 256 \mathsf{Q}(-5).
	\end{align*}
	
	The variety $\widehat{K}$ is the quotient of $\widehat{Y}$ by the regular involution $\hat{\iota}$ associated to the double cover $\hat{\epsilon}$. It follows that (since our motives are with rational coefficients) $$\mathfrak{h}(\widehat{K})\cong \mathfrak{h}(\widehat{Y})^{\hat{\iota}}$$
	belongs to $\langle \mathfrak{h}(\underline{Y}) \oplus \mathfrak{h}(A)\rangle_{\mathsf{CHM}}$; hence, the same is true for $\mathfrak{h}(\widetilde{K})$.
\end{proof}

\begin{proposition} \label{prop:last}
	The motive $\mathfrak{h}(\underline{Y})$ belongs to $\langle\mathfrak{h}(A)\rangle_{\mathsf{CHM}}$.
\end{proposition}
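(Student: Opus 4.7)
The plan is to combine Proposition \ref{prop:moduli} with the theorems of B\"ulles~\cite{Bue18}, Rie{\ss}, and Huybrechts on K3 surfaces, using the Kummer K3 of $A$ as a bridge between the moduli space on a K3 and the Abelian surface $A$.

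First I would invoke Proposition \ref{prop:moduli} to replace $\underline{Y}$ by a smooth projective moduli space $M_w(S,L)$ of $L$-stable sheaves on some K3 surface $S$, with $w$ a primitive and effective Mukai vector. Since $\underline{Y}$ and $M_w(S,L)$ are birational hyper-K\"ahler varieties, by Rie{\ss}'s theorem that birational hyper-K\"ahler varieties have isomorphic rational Chow motives, we obtain $\mathfrak{h}(\underline{Y}) \cong \mathfrak{h}(M_w(S,L))$. Then B\"ulles' result (the analogue of the theorem used for the primitive case in the introduction) gives
\[
\mathfrak{h}(M_w(S,L)) \in \langle \mathfrak{h}(S)\rangle_{\mathsf{CHM}}.
\]
So it remains to show that $\mathfrak{h}(S) \in \langle \mathfrak{h}(A)\rangle_{\mathsf{CHM}}$.

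Next I would identify $S$, up to isogeny, with the Kummer K3 surface $\mathrm{Km}^*(A)$ obtained as the minimal resolution of $\Km(A)=A/\pm 1$. To do this, I combine Proposition \ref{prop:moduli}, Lemma \ref{lem:hodgeisometry}, and Remark \ref{rem:H2}: these give rational Hodge isometries (up to scaling the form)
\[
H^2_{\tr}(S,\QQ) \cong H^2_{\tr}(\underline{Y},\QQ) \cong H^2_{\tr}(\widetilde{K},\QQ) \cong H^2_{\tr}(A,\QQ).
\]
On the other hand, the standard construction of the Kummer K3 surface yields a Hodge isometry $H^2_{\tr}(\mathrm{Km}^*(A),\QQ) \cong H^2_{\tr}(A,\QQ)$. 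Hence $S$ and $\mathrm{Km}^*(A)$ have isomorphic rational transcendental Hodge structures, i.e.\ they are isogenous K3 surfaces. By Huybrechts' theorem that isogenous K3 surfaces have isomorphic rational Chow motives, we get $\mathfrak{h}(S) \cong \mathfrak{h}(\mathrm{Km}^*(A))$.

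Finally, since $\mathrm{Km}^*(A) \to \Km(A)$ is the blow-up of $16$ points, the blow-up formula yields $\mathfrak{h}(\mathrm{Km}^*(A)) = \mathfrak{h}(\Km(A)) \oplus 16\,\mathsf{Q}(-1)$. By definition $\mathfrak{h}(\Km(A))$ is a direct summand of $\mathfrak{h}(A)$ (the $\pm 1$-invariant part), so $\mathfrak{h}(\mathrm{Km}^*(A)) \in \langle \mathfrak{h}(A)\rangle_{\mathsf{CHM}}$, and chaining the inclusions proves the proposition. The step I expect to be the most delicate is the passage from the rational Hodge isometry between the transcendental lattices of $S$ and $\mathrm{Km}^*(A)$ to an actual isomorphism of rational Chow motives; this is exactly what Huybrechts' isogeny theorem provides, and one must check that the Hodge isometries produced above are sufficient to apply it (in particular that one may adjust scalars without affecting the conclusion), but this is purely a bookkeeping verification once the transcendental picture is in place.
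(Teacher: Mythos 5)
Your proposal follows essentially the same route as the paper: reduce to $M_w(S,L)$ via Proposition \ref{prop:moduli}, Rie{\ss} and B\"ulles, then show $S$ is isogenous to the Kummer K3 surface of $A$ by comparing transcendental lattices, and conclude with Huybrechts. The ``bookkeeping'' you flag does work out exactly as you hope: the composite Hodge isometry scales the form by $[2]\cdot[2]=[4]$, so dividing by $2$ gives an honest Hodge isometry of transcendental lattices, which the paper then extends to all of $H^2(-,\QQ)$ by Witt's theorem before applying Huybrechts' result.
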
 
\begin{proof}
	By Proposition \ref{prop:moduli}, the variety $\underline{Y}$ is birational to a moduli space $M_w(S,L)$ of stable sheaves on a $\mathrm{K}3$ surface $S$.
	By \cite{Rie14}, birational hyper-K\"{a}hler varieties have isomorphic Chow motives; B\"ulles' theorem in \cite{Bue18} then implies that $\mathfrak{h}(\underline{Y})\in \langle \mathfrak{h}(S)\rangle_{\mathsf{CHM}}$. By \cite{O'G97} we have a Hodge isometry 
	\[
    H^2(M_{w}(S, L), \ZZ) \xrightarrow{\ \sim\ } w^{\bot}\subset \widetilde{H}^2(S,\ZZ),
	\]
	which gives a Hodge isometry $H^2_{\tr}(\underline{Y},\ZZ)\xrightarrow{\ \sim\ } H^2_{\tr}(S,\ZZ)$ of transcendental lattices.
	
	Let now $T$ be the K3 surface which is the minimal resolution of $\Km(A)=A/\pm 1$. 
	It is well known (\emph{cf.}~\cite{morrison1985}) that $\mathfrak{h}(T)\in \langle \mathfrak{h}(A)\rangle_{\mathsf{CHM}}$ and that we have a Hodge isometry 
	\[
	H^2_{\tr}(T, \ZZ) \xrightarrow{\ \sim\ } H^2_{\tr}(A, \ZZ) [2].
	\]
	Combining this with Lemma \ref{lem:hodgeisometry} we obtain a rational Hodge isometry 
    \[
   \phi\colon  H^2_{\tr}(T ,\QQ)  \xrightarrow{\ \sim\ } H^2_{\tr}(A, \QQ)[2] \xrightarrow{\ \sim\ } H^2_{\tr}(S,\QQ)[4].
    \]	
    Then $\frac{1}{2} \phi \colon H^2_{\tr}(T ,\QQ) \xrightarrow{\ \sim\ } H^2_{\tr}(S,\QQ)$ is a Hodge isometry. 
	By Witt's theorem, we can extend it to an isometry  $\psi\colon H^2(T,\QQ)\xrightarrow{\ \sim\ } H^2(S, \QQ)$, which is necessarily compatible with the Hodge structures; hence, the K3 surfaces $S$ and $T$ are isogenous. 
	Now a result of Huybrechts \cite{huybrechtsMotives} implies that the motives of $S$ and $T$ are isomorphic, and we conclude that $\mathfrak{h}(\underline{Y})\in \langle\mathfrak{h}(A)\rangle_{\mathsf{CHM}}$. 
\end{proof}

	\begin{proof}[Proof of Theorem \ref{thm:main} and Corollary \ref{cor:main}]
	By Lemma \ref{lemma:easy} and Proposition \ref{prop:last}, the motive $\mathfrak{h}(\widetilde{K}_{v}(A,H))$ belongs to $\langle \h(A)\rangle_{\mathsf{CHM}}$ and hence, by definition, it is an Abelian motive. It is known that Abelian motives are Kimura finite dimensional (see \cite{Kimura}).
	
	By~\cite{moonenZarhin}, the Hodge conjecture holds for any self-product of the Abelian surface~$A$. Then~\cite[Lemma~4.2]{Ara06} implies that the Hodge conjecture holds for $\widetilde{K}_{v}(A,H)$ and any of its self-powers. Finally, by~\cite[Theorem~1.18]{FFZ} the Mumford--Tate conjecture holds for any projective variety of OG$6$-type; hence, the conjectures of Hodge and Tate are equivalent for such a variety and its self-products. This proves Corollary~\ref{cor:main}.
	
	It remains to prove the formula 
	\begin{align*} 
		\h (\widetilde{K}_v (A,H)) = & \h (\Km(A)^{(3)})  \oplus \h(\Km(A)\times \Km(A))(-1) \oplus \h(\Km(A\times A))(-1)  \\  & \oplus 137\h(\Km(A))(-2)\oplus 512\mathsf{Q}(-3).
		\end{align*}
		
	Let ${Z}$ denote the motive at the right hand side, and let $H^{\bullet}(Z)$ denote the associated rational Hodge structure. Comparing $H^{\bullet}(Z)$ with the description of the Hodge structure on the cohomology of $\widetilde{K}_v (A,H)$ given in \cite[\S3.5]{green2019llv}, we deduce that there is an isomorphism
	$\phi\colon H^{\bullet} (\widetilde{K}_{v}(A,H),\QQ) \xrightarrow{\ \sim\ } H^{\bullet}(Z)$ of rational Hodge structures. Since both $\h(\widetilde{K}_{v}(A,H))$ and $Z$ belong to $\langle \h(A) \rangle_{\mathsf{CHM}}$, the isomorphism $\phi$ is induced by the class of an algebraic cycle. Hence, there exists a morphism $\Phi\colon \h (\widetilde{K}_v (A,H)) \to Z$ of motives with Hodge realization $\phi$.
	Consider the realization functor $\mathsf{CHM} \to \mathsf{HS}$ to the category $\mathsf{HS}$ of rational Hodge structures; by \cite{AndreMotifs}, this functor is conservative when restricted to the tensor subcategory of Abelian motives. As the Hodge realization of $\Phi$ is an isomorphism in $\mathsf{HS}$, this implies that $\Phi$ is an isomorphism of Chow motives.
	\end{proof}
	
	\section{A conjecture}
	
	We say that two hyper-K\"{a}hler varieties $X$ and $Y$ are isogenous if they are deformation equivalent and there exists a rational Hodge isometry $H^2(X, \QQ)\xrightarrow{\ \sim \ } H^2(Y,\QQ)$. 	
	Theorem \ref{thm:main} gives some evidence towards the following.
	
	\begin{conjecture}\label{conj:isogenous} 
		Let $X$ and $Y$ be isogenous hyper-K\"{a}hler varieties. Then there exists an isomorphism of rational Chow motives $\h(X)\xrightarrow{\ \sim \ } \h(Y)$.
	\end{conjecture}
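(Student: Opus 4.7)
The plan is to mimic the structure of the proof of Theorem~\ref{thm:main}. Given isogenous hyper-K\"ahler varieties $X$ and $Y$ with a rational Hodge isometry $\phi\colon H^2(X,\QQ)\xrightarrow{\sim} H^2(Y,\QQ)$, the strategy is to first extend $\phi$ to an isomorphism of total rational cohomology, then realize this extension by an algebraic correspondence, and finally upgrade to a Chow motive isomorphism via conservativity of the Hodge realization on a suitable subcategory.

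The extension step would rely on the Looijenga--Lunts--Verbitsky formalism. For a hyper-K\"ahler variety $X$ of a fixed deformation type, the graded rational Hodge structure $H^{\bullet}(X,\QQ)$ carries an action of the LLV algebra $\mathfrak{g}(X)\cong\mathfrak{so}(H^2(X,\QQ)\oplus U_{\QQ})$ whose isomorphism class is determined, up to deformation-invariant multiplicities, by the quadratic Hodge structure $H^2(X,\QQ)$. Since $X$ and $Y$ are deformation-equivalent, $\phi$ extends canonically to an isomorphism $\Phi\colon H^{\bullet}(X,\QQ)\xrightarrow{\sim} H^{\bullet}(Y,\QQ)$ of graded rational Hodge structures, exactly in the spirit of the comparison with \cite{green2019llv} used in the final step of the proof of Theorem~\ref{thm:main}.

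The second step is to show that $\Phi$ is induced by an algebraic cycle in $\CH^{\dim X}(X\times Y)_{\QQ}$. One would first try to place both $\h(X)$ and $\h(Y)$ inside a common pseudo-Abelian tensor subcategory $\mathcal{T}\subset \mathsf{CHM}$ on which the Hodge realization is conservative; by André's theorem \cite{AndreMotifs}, the category of Abelian motives is such a $\mathcal{T}$. For deformation types known to be motivated by a surface --- K3$^{[n]}$ (by \cite{Bue18}), generalized Kummer, OG$10$ (by \cite{FFZ}), and now OG$6$ by Theorem~\ref{thm:main} --- Huybrechts' theorem \cite{huybrechtsMotives} on isogenous K3 surfaces (together with its Abelian analogue) should allow one to transport both $\h(X)$ and $\h(Y)$ into $\langle \h(R)\rangle_{\mathsf{CHM}}$ for a common surface $R$. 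Once $\Phi$ is shown to lift to a morphism in $\mathcal{T}$, the concluding conservativity argument of the proof of Theorem~\ref{thm:main} produces the desired isomorphism $\h(X)\xrightarrow{\sim}\h(Y)$.

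The main obstacle is the combined absence of two ingredients in full generality: one does not know that the Chow motive of an arbitrary hyper-K\"ahler variety is Abelian (so conservativity has nothing to act on), and one does not know that the LLV extension $\Phi$ is realized by an algebraic class (which is essentially the Hodge conjecture for $X\times Y$ in the relevant K\"unneth components). Unconditional progress therefore proceeds deformation type by deformation type; for types coming from a surface the remaining task is to show that the LLV extension of a rational Hodge isometry on $H^2$ already descends to a morphism inside $\langle \h(R)\rangle_{\mathsf{CHM}}$, a refinement of the explicit computation carried out here for the OG$6$ type.
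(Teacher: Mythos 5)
The statement you are asked to prove is Conjecture~\ref{conj:isogenous}, and the paper does not prove it: it is stated as an open conjecture, accompanied only by a conditional derivation and a list of special cases. Your proposal, as you yourself concede in its final paragraph, is likewise not a proof. The two ingredients you identify as missing --- that the LLV-type extension $\Phi$ of the Hodge isometry be algebraic, and that the realization functor be conservative on a subcategory containing $\h(X)$ and $\h(Y)$ --- are precisely the two standard conjectures (the Hodge conjecture, i.e.\ fullness of $r\colon \mathsf{CHM}\to\mathsf{HS}$, and the conservativity conjecture) from which the paper itself deduces Conjecture~\ref{conj:isogenous} in Section~5. So your argument is conditional in exactly the way the paper's discussion is, and it cannot be accepted as a proof of the statement.

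Within that caveat, your reconstruction tracks the paper's discussion closely: the existence of the graded Hodge-structure isomorphism $\Phi$ extending $\phi$ for isogenous hyper-K\"ahler varieties is exactly the result of \cite{solda19} that the paper invokes, and the reduction to a common surface $R$ via \cite{Bue18}, \cite{FFZ}, \cite{huybrechtsMotives} and Theorem~\ref{thm:main} is the mechanism behind the paper's cases (i)--(iv). One inaccuracy worth flagging: you suggest that for all deformation types ``motivated by a surface'' the conjecture is within reach, but for moduli spaces on K3 surfaces (cases (i) and (ii) of the paper) the argument additionally requires that the motives of $S$ and $T$ be Kimura finite-dimensional \emph{and} that the Hodge conjecture hold for all powers of $S$ and $T$; without these hypotheses the fullness and conservativity of the realization on $\langle\h(S),\h(T)\rangle_{\mathsf{CHM}}$ are not known, so the conjecture remains open even for general K3$^{[n]}$-type moduli spaces. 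Only in the Abelian-surface cases (iii) and (iv) --- where explicit formulas for the motive are available and the Hodge conjecture for powers of an Abelian surface is known by \cite{moonenZarhin} --- does the argument close unconditionally.
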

	For K3 surfaces the conjecture is proven in \cite{huybrechtsMotives}, and for Hilbert schemes on K3 surfaces it follows from the formula for their motives given in \cite{deCataldoMigliorini2002}. In the realm of Andr\'e motives, it has been verified in \cite{floccari2020} for all known deformation types and for hyper-K\"{a}hler varieties with trivial odd cohomology and $b_2>6$. 
	 
\begin{remark}
	A plausible stronger version of the above conjecture would require the existence of a multiplicative isomorphism $\h(X)\xrightarrow{\ \sim \ } \h(Y)$, meaning that it identifies the intersection product on the two sides. For K3 surfaces, this is proven in~\cite{FuVial}.
	
\end{remark}
	
	We point out that Conjecture \ref{conj:isogenous} would follow from two well-known conjectures: the Hodge conjecture, saying that the realization functor $r\colon \mathsf{CHM}\to \mathsf{HS}$ is full, and the conservativity conjecture, saying that the functor $r$ is conservative (see \cite{ayoub} for an inspiring overview of this conjecture). Indeed, isogenous hyper-K\"{a}hler varieties $X$ and~$Y$ have isomorphic rational Hodge structures (as shown in\cite{solda19}); assuming the Hodge conjecture, there exists a morphism $f\colon \h(X) \to \h(Y)$ of Chow motives whose realization is an isomorphism of Hodge structures, and hence, by conservativity, $f$ itself would be an isomorphism. 
	Besides $\mathrm{K}3$ surfaces, Conjecture \ref{conj:isogenous} is known in the following cases:
	\begin{enumerate}[label=(\roman*)]
		\item when $X$ and $Y$ are smooth and projective moduli spaces on K3 surfaces $S$ and $T$ whose motive is Kimura finite-dimensional, and such that the Hodge conjecture holds for any power of $S$ and $T$; for instance $S$ and $T$ may be Kummer surfaces (see \cite{Ped12} for more examples of K3 surfaces with finite-dimensional motive);
		\item when $X$ and $Y$ are ten dimensional O'Grady resolutions of moduli spaces on K3 surfaces $S$ and $T$ satisfying the same properties as above;
		\item for generalized Kummer varieties $X$ and $Y$ on Abelian surfaces $A$ and $B$;
		\item for OG$6$ varieties $X$ and $Y$ obtained via resolution of singular moduli spaces on Abelian surfaces $A$ and $B$.
	\end{enumerate}

Note that in each of the cases listed above, $X$ and $Y$ are isogenous if and only if the underlying surfaces are isogenous. 
The first two cases follow via the argument outlined above, because the realization functor $\mathsf{CHM}\to \mathsf{HS}$ is full and conservative when restricted to $\langle\h(X), \h(Y) \rangle_{\mathsf{CHM}}$. In fact, by \cite{Bue18} in case (i) and \cite{FFZ} in case (ii), we have the inclusion $\langle\h(X), \h(Y) \rangle_{\mathsf{CHM}} \subset \langle\h(S), \h(T) \rangle_{\mathsf{CHM}}$; the surfaces $S$ and $T$ are isogenous $\mathrm{K}3$ surfaces and hence they have isomorphic motives. Finally, the assumptions on $S$ and $T$ ensure that the realization functor is full and conservative when restricted to $\langle\h(S), \h(T) \rangle_{\mathsf{CHM}}$. 
In the last two cases, the validity of Conjecture~\ref{conj:isogenous} follows from the formulae for the motives of $X$ and $Y$ in terms of those of $A$ and $B$ given in \cite{Xu18}, \cite{FTV19} for case (iii) and by Theorem \ref{thm:main} for case (iv), as we observed that  $A$ and $B$ are isogenous and hence have isomorphic rational Chow motives.


\end{document}